\documentclass [12pt,a4paper,reqno]{amsart}
  \input amssymb.sty

\usepackage{epsfig}

\newcommand{\ef}{\end{equation}}
\chardef\bslash=`\\ 





\hfuzz1pc 



\newtheorem*{thm*}{Theorem}

\newtheorem{lem}{Lemma}[section]

\newtheorem{prop}{Proposition}[section]
\newtheorem{prop*}{Proposition}

\theoremstyle{definition}
\newtheorem{defn}{Definition}[section]
\newtheorem{examp}{Example}
\newtheorem*{examp*}{Example}
\newtheorem*{remark*}{Remark}
\newtheorem*{CC*}{Crossover Conjecture}
\newtheorem*{Note*}{Note}
\newtheorem*{defn*}{Definition}

 \theoremstyle{remark}
\newtheorem{remark}{Remark}[section]



\newcommand{\wt}{\widetilde}

 \renewcommand{\sectionmark}[1]{}

\newcommand{\ve}{\varepsilon}

\newcommand{\g}{\gamma}

\newcommand{\lm}{\lambda}

 \date{}
\begin{document}

\title  [Short strings] { $\frak {sl}_2$-actions along short strings for spin blocks}
\author[O. Alon, M. Schaps, M. White]
{ Ortal Alon, Mary Schaps, and Michal White}
 \address{Department of Mathematics\\
 Bar-Ilan University, 52900 Ramat Gan, Israel}
 \email {Ortal.alon@gmail.com}

 \address{Department of Mathematics\\
 Bar-Ilan University, 52900 Ramat Gan, Israel}
\email{mschaps@macs.biu.ac.il}

 \address{Department of Mathematics\\
 Bar-Ilan University, 52900 Ramat Gan, Israel}
\email{avner18@bezeqint.net}


 \begin{abstract} 
 The problem of source algebra equivalences between blocks at the ends of the maximal strings of spin blocks of the symmetric and alternating groups has recently been settled \cite{LS}, but so far there has not even been a candidate of the tilting complex defining the reflections of the internal blocks of the string.
 
Our aim in this paper is to propose a definition for the mappings $E_i$ and $F_i$ and for their  divided powers.  The solution we propose here is to halve the operators only when both halves are isomorphic, which, on the level of the Grothendieck group, corresponds to working with paired simples as pairs, and crossing over between the symmetric and alternating groups, in order to send paired simples to paired simples.
 This is equivalent to working with the irreducible supermodules, as in \cite{BK1}, though we don't introduce the supermodule language into this paper. The main difference between our approach and theirs is the crossovers, which occasionally force extra halving of modules.

 The definition is based on the method of permutation modules from \cite{KS}. We apply it to short strings and show that, at least on the individual strings, it is compatible with the form of the tilting complex used by Chuang and Rouquier.

 \end{abstract}
\thanks{Partially supported by the Bar-Ilan Research Authority}
\thanks{Portions of this work appeared in the Master's thesis of the first and last author at Bar-Ilan University}

\maketitle

 \section{Introduction}\label{introduction}

Let $G$ be a finite group and let $k$ be a field of characteristic $p$, where
$p$ divides $|G|.$ Assume that $k$ is sufficiently large that it is a splitting field for all relevant finite groups.
 The $p$-blocks of the symmetric
groups are determined by a partition $\rho$ called the $p$-core and by a non-negative integer $w$ called the weight. Rouquier and Chuang \cite{CR} showed that all blocks of a fixed weight in the symmetric and alternating groups are derived equivalent.  The Chuang-Rouquier method, called $\frak sl_2$-categorization,  uses Lie group methods, including reflection functors.

The symmetric and alternating groups have central extensions $\tilde S_n$ and $\tilde A_n$ with kernel $C_2$, the cyclic group of order $2$, which we will loosely refer to as covering groups (although, for a few small values of $n$, the kernel of the extension is not contained in the commutator of the group). We assume henceforward that $p$ is an odd prime, in which case all blocks can be divided into ordinary blocks of $S_n$ or $A_n$ or \textit{spin blocks}, whose characters all take the value $-1$ on the non-trivial element of the center.

As with blocks of the symmetric groups, the spin blocks of the covering groups are determined by a non-negative integer $w$ called the weight, and by a partition $\rho$ called the $p$-bar core. However, the $p$-bar cores must be strict partitions, cannot contain any parts divisible by $p$, and cannot contain parts congruent to $i$ and to $p-i$ for any $i$ satisfying $1 \leq i \leq p-1$.
Any pair $(\rho, w)$ for $w > 0$ determines exactly one block of $\tilde S_n$ and one block of $\tilde A_n$.  In the sequel, we will denote the pair $(\rho, w)$ by $\rho^w$.

The obstruction to making an immediate generalization of the Chuang-Rouquier \cite{CR} result to the spin blocks of $\tilde S_n$ lies in the fact that blocks which, by the combinatorics of the reflection functors, should seemingly be derived equivalent, do not always have the same number of simple modules.  Since the number of simples is invariant under derived equivalence, this meant that they could not in fact be in the same derived equivalence class.
The solution to this dilemma is the following, inspired by the results of \cite{KS} and formulated explicitly in \cite{AS}:

\begin{CC*} (Kessar-Schaps)  If $p$ is an odd prime, then among all the spin blocks of $k\tilde S_n$ and $k\tilde A_n$, there are exactly two derived equivalence classes for each weight $w > 0$, and for each $p$-bar core there is exactly one block of weight $w$ in each equivalence class.  The extremal points of the maximal strings in the crystal graph correspond to Morita equivalent blocks, making the appropriate crossover from $k\tilde S_n$ to $k\tilde A_n$ if the parities differ.
\end{CC*}

  Let $kG=\oplus B_i$ be a decomposition of the group algebra
into blocks, and let $D_i$ be the defect group of the block $B_i,$ of order
$p^{d_i}.$ 
If the Crossover Conjecture can be proven, one of the intended applications is to the proof of Broue's conjecture.  Brou\'e \cite{B1},\cite{B2} has conjectured that if $D_i$ is abelian and $B_i$ is a block with defect group $D_i$, then $B_i$ and $b_i$ are derived equivalent, i.e., the bounded
derived categories $D^b(B_i)$ and $D^b(b_i)$ are equivalent.

\section{The block-reduced crystal graph}

The roles of $\tilde S_n$ and $\tilde A_n$ are much more symmetrical in the spin case then they are for the ordinary representations of the symmetric and alternating groups.
Over a field of characteristic zero, the irreducible representations of both correspond to partitions, but now they are strict partitions, containing no repeated parts, not even the part $1$.  Whereas in the symmetric case there was a one-to-one correspondence of partitions and irreducible representations, in the spin case the matter is determined by the parity of the partition.

Let $n(\lm)$ denote the number of parts in the partition $\lm$, and let $|\lm|$ denote the sum of the parts. denotes the sum of the parts. Each strict partition $\lm$ of $n$ has a parity
$$\ve = \ve(\lm) \equiv |\lm| - n(\lm) \mod~2.$$  The partition is called even if $\ve = 0$ and odd if $\ve = 1$.  Over a field of characteristic $0$, an even strict partition labels two conjugate irreducible representations of $\tilde A_n$ and one of $\tilde S_n$.  An odd strict partition labels one irreducible representation of $\tilde A_n$ and two of $\tilde S_n$.

Returning temporarily to the symmetric group case, the irreducible modules for all the blocks of all the symmetric groups can be arranged into a labeled graph called the \textit{crystal graph}, with the edges connecting irreducible modules of $S_n$ to simple modules of $S_{n+1}$ labeled by the residues modulo $p$. A prescription for the edges can be found, for example, in \cite{K1}.  A \textit{maximal string} is a maximal connected sequence of simple modules joined by edges with the same label.  The reflection functors of Chuang-Rouquier reflect the maximal strings around their midpoint, with the reflection preserving the weights of the blocks to which the simples belong.  Furthermore, it was shown by Scopes \cite{Sc} that the extremal points of the maximal strings are actually Morita equivalent.  A Lie theoretic version of this result, due to Brundan and Kleshchev, can be found in section 11 of \cite{Kl}.

Recalling that $p$ is odd, we set $t = (p-1)/2$.

We now define the Scopes involutions.  These were defined by Scopes \cite{Sc} in the symmetric group case, and generalized to the spin case by Kessar in \cite{K}.  Let $D_p$ be the set of all partitions which are strict for all parts not divisible by p.
\begin{defn} For $0<i\le t$, the \textit{Scopes involution} $K_i: D_p \to D_p$ will interchange the parts congruent to $i$ and $i+1$ and also interchange the parts congruent to the complements $p-i$ and $p-i-1$.  For $i=0$, we have an involution $K_0: D_p \to D_p$ which exchanges $ap+1$ and $ap-1$, as well as adding a part $1$ if it is not present, and removing it, if it is.
\end{defn}

 \begin{remark}
The \textit{content} of a strict partition is obtained by filling in each row of the Young diagram by copies of the sequence $$0,1,\dots,t-1,t,t-1,\dots,0,0,1,\dots$$, and letting the $i$-th coordinate of the content be the number of instances of $i$.
We have a geometric realization of a block-reduced version of the crystal graph in ${\bf R^{t+1}}$ given by placing the vertex labeled by $\rho^w$ at the point $\g(\rho^w)$ given by its content.
Letting the variables be $x_0,...,x_t$, we let an edge labeled by $i$ be represented by a line of length one parallel to the $x_i$ axis.   In Fig. 1, we give the geometric realizations for $p=5$, in a three-dimensional representation. To make the three-dimensional representation easier to view, we have drawn the positive $\g_t$ axis going down rather than up.  Within each layer, the edges labeled $0$ go from the upper right to the bottom left, and the edges labeled $1$ from the upper left to the bottom right.  All blocks in a horizontal line on the two-dimensional representation, in both figures, have the same rank, i.e., represent blocks in the same $\tilde S_n$ or the same $\tilde A_n$. Each layer has central symmetry.
Note that each layer contains a translated copies of the previous layer.
\end{remark}
\begin{figure}

\centerline{\psfig{figure= 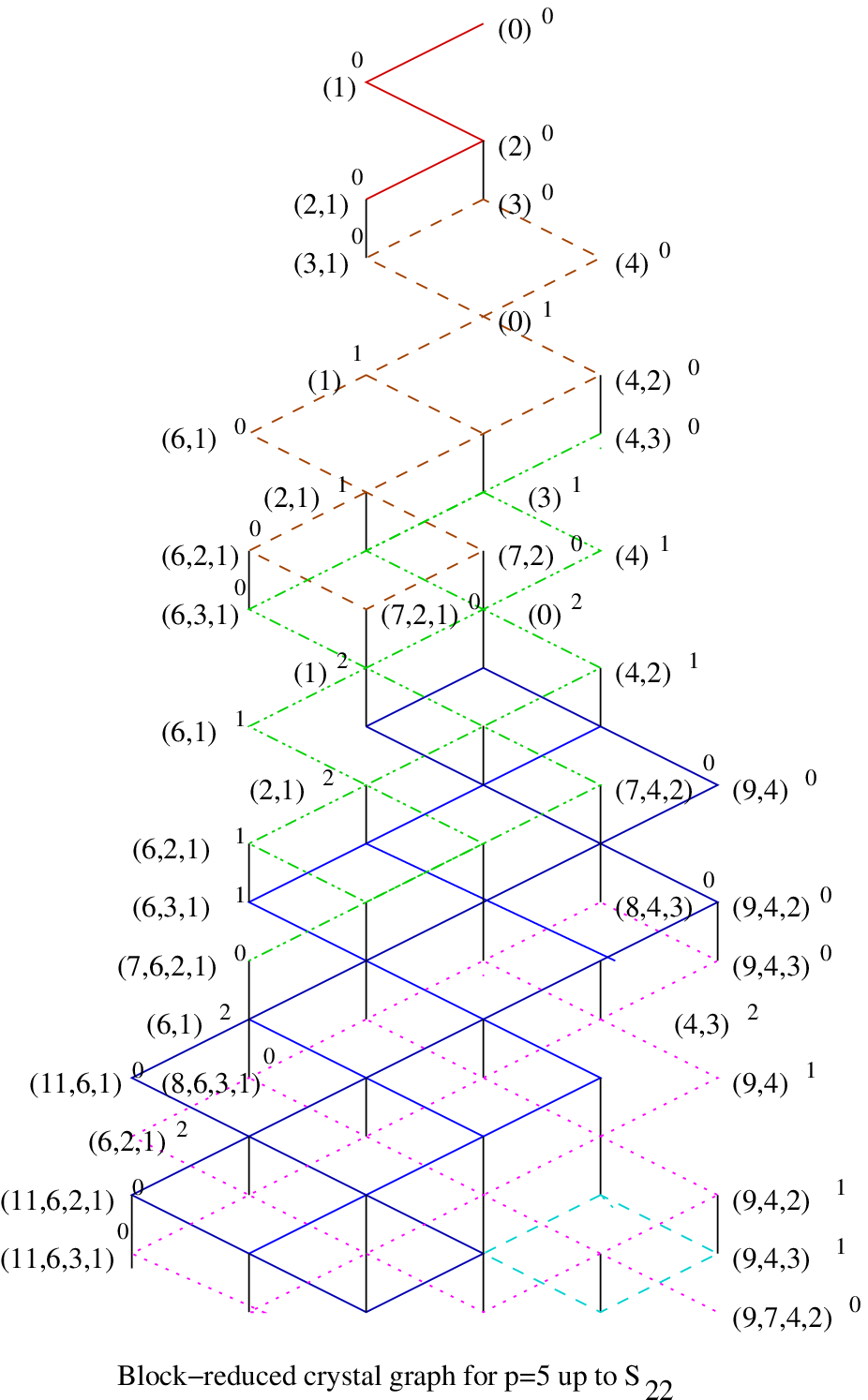}}

\caption{}
\end{figure}
\section{Divided powers}

We now propose a candidate to give ``divided powers" between symmetric blocks of a string. Let $(K, \mathcal O, k)$ be a modular triple, with $K$ of characteristic $0$ and $k$ of characteristic $p$.  The ring $\mathcal O$ is a local integral domain, with residue field $k$ and quotient field $K$.

To establish our notation, let $G = \widetilde S_n$, let $G'$ be either $\widetilde S_n$ or $\widetilde A_n$, let $G''$ be either$\widetilde S_m$ or $\widetilde A_m$, for an integer $m<n$. Let $\alpha = n-m$. Let $b$ be the block idempotent of a block $B'$ of $RG'$ with core $\nu$ and let $c$ be the block idempotent of a block $B''$ of $RG'$ with core $\mu$. Since $G'$ and $G''$ are subgroups of $G$, $\mathcal OG$ is a permutation module over $G'$ acting from the left and $G''$ acting on the right.  Let us assume that the two blocks have the same defect group $D$ and are extremal blocks of an $i$-string, with common weight $w$.  By \cite{Ca} we can presume that $D$ is the $p$-Sylow subgroup of $\widetilde S_{pw}$, and that $\wt S_{|\nu|}$ and $\wt S_{|\mu|}$ are embedded into $\wt S_n$ and $\wt S_m$ as liftings of the permutations the numbers complementary to the first $pw$ numbers on which $\wt S_pw$ acts.

Let $\sigma$ be an element of $\widetilde S_{|\nu|}-\widetilde A_{|\nu|}$ and if $w>0$, let $\tau$ be an element of $N_{\widetilde S_{pw} }(D)-\widetilde A_{pw}$.
Let us define the parity $\epsilon(G')$ to be $1$ if $G' = \widetilde S_n$ and $0$ if $G' = \widetilde A_n$, and similarly for $G''$.  We define $\bar G' = G' \bigcap \wt S_{|\nu|}$, which equals either $\wt S_{|\nu|}$ or $\wt A_{|\nu|}$ and similarly
$\bar G'' = G'' \bigcap \wt S_{|\mu|}$.
In Lemma 6.1 of \cite {KS}, the first section of the lemma is a proof of the following:

\begin{lem}  Let $G, G', G'', b, c, \nu, \mu$ be as above.  If $w>0$, then in the decomposition of
$\mathcal OGbc$ as a $\mathcal O[G' \times G''^{op}]$ bimodule, the direct sum of sub-bimodules of vertex $D$ has indecomposable components which corresponds to the decomposition of the semisimple module $k\wt S_{|\nu|} \bar b \bar c$ as a module over $k[\bar G' \times \bar G''^{op} \times R_{G'G''}],$ where
\begin{itemize}
\item If $\epsilon(G')=1, \epsilon(G'')=1$, then $R_{G'G''} = <(\tau,\tau^{-1})>$
\item If $\epsilon(G')=1, \epsilon(G'')=0$, then $R_{G'G''} = <(\tau,\tau^{-1}\sigma^{-1})>$
\item If $\epsilon(G')=0, \epsilon(G'')=1$, then $R_{G'G''} = <(\sigma\tau,\tau^{-1})>$
\item If $\epsilon(G')=0, \epsilon(G'')=0$, then $R_{G'G''} = <(\sigma\tau,\tau^{-1}\sigma^{-1})>$
\end{itemize}
\end{lem}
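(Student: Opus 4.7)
The plan is to follow the strategy of Lemma 6.1 of \cite{KS}, with the only novelty being the careful bookkeeping of parities. First, I would apply Mackey's formula to decompose $\mathcal{O}Gbc$ as an $\mathcal{O}[G'\times G''^{op}]$-bimodule along the double cosets $G'\backslash G/G''$. Among these summands, the indecomposable ones with vertex $D$ correspond precisely to the double cosets $G'gG''$ whose stabilizer under the $G'\times G''$-action contains a conjugate of $D$. Because $D$ is (by the reduction of \cite{Ca}) the Sylow $p$-subgroup of $\wt S_{pw}$, which commutes with the symmetric groups on the complementary letters in which $\wt S_{|\nu|}$ and $\wt S_{|\mu|}$ sit, the contributing double cosets can be parametrized by $N_G(D)$-elements that intertwine the two block idempotents.

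Next, I would push the analysis through the Brauer correspondent. After projecting to $bc$, the vertex-$D$ part of the permutation bimodule is controlled by $N_G(D)/D$ acting on the core-level piece $k\wt S_{|\nu|}\bar b\bar c$, which is semisimple since its weight is zero. The group $N_{\wt S_{pw}}(D)/D$ is generated modulo $C$ by the element $\tau$, and the identification produces a $\bar G'\times\bar G''^{op}$-module structure on $k\wt S_{|\nu|}\bar b\bar c$ together with an extra diagonal action by a cyclic group generated by a $\tau$-like element acting on the left and $\tau^{-1}$-like element on the right. This is exactly the shape of the statement, with $R_{G'G''}$ playing the role of this extra cyclic factor.

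The heart of the argument, and the main place where care is needed, is the parity bookkeeping. If $G'=\wt S_n$, then $\tau$ itself lies in $G'$, so the left generator of the extra action is $\tau$; but if $G'=\wt A_n$, then $\tau$ is odd and does not lie in $G'$, while $\sigma\tau$ is even and does. One therefore must replace $\tau$ by $\sigma\tau$ on the left, and symmetrically $\tau^{-1}$ by $\tau^{-1}\sigma^{-1}$ on the right when $G''=\wt A_m$. Running through the four combinations of $\epsilon(G'),\epsilon(G'')\in\{0,1\}$ yields exactly the four descriptions of $R_{G'G''}$ claimed in the lemma.

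The step I expect to be the main obstacle is verifying that the replacement $\tau\mapsto\sigma\tau$ does not change the isomorphism class of the sub-bimodule of vertex $D$ but only the description of its extra-symmetry group. Concretely, one must check that $\sigma$ centralizes the $\bar G'\times\bar G''^{op}$-action on $k\wt S_{|\nu|}\bar b\bar c$ well enough that inserting $\sigma$ into the diagonal generator merely twists the $R_{G'G''}$-action by an element already accounted for in $\bar G'$, so that the resulting bimodule is the same one described in all four cases. Once this compatibility is established, the identification with $k\wt S_{|\nu|}\bar b\bar c$ as a $k[\bar G'\times\bar G''^{op}\times R_{G'G''}]$-module is immediate from the Brauer-correspondent description.
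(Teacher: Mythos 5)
The paper does not actually prove this lemma: it is presented as a restatement of (the first part of) Lemma 6.1 of \cite{KS}, so there is no in-paper argument to compare yours against line by line. Your outline --- Mackey decomposition of the permutation bimodule $\mathcal OGbc$ over $G'\times G''^{op}$, isolation of the vertex-$D$ summands, and then parity bookkeeping to decide which diagonal generators actually lie in $G'\times G''$ --- has the right general shape and is consistent with the strategy of \cite{KS}. In particular your observation that $\tau$ is odd, hence lies in $G'$ only when $G'=\wt S_n$, and must be replaced by the even element $\sigma\tau$ when $G'=\wt A_n$ (and dually for $G''$), is exactly the mechanism producing the four cases.

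Two genuine gaps remain. First, the central step is asserted rather than argued: the claim that the vertex-$D$ part of $\mathcal OGbc$ ``is controlled by $N_G(D)/D$ acting on $k\wt S_{|\nu|}\bar b\bar c$'' is essentially the content of the lemma itself. To establish it you would need to actually carry out the analysis of which double cosets in $G'\backslash G/G''$ contribute summands of vertex $D$, using Cabanes' description of $D$ as the Sylow $p$-subgroup of $\wt S_{pw}$ and of the relevant normalizer as a twisted central product with $\wt S_{|\nu|}$, and then apply the Brauer construction to identify the multiplicity module as $k\wt S_{|\nu|}\bar b\bar c$; none of this is supplied. Second, your closing paragraph misstates what has to be checked: the four cases are not supposed to yield ``the same bimodule'' with four descriptions of one symmetry group. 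The group $R_{G'G''}$ is the extra stabilizer genuinely present inside $G'\times G''^{op}$, and its action changes the orbit structure with the parities --- this is precisely what the subsequent Proposition exploits, since the number $d$ of isomorphism classes depends on $\epsilon(G')$ and $\epsilon(G'')$. So there is no ``compatibility'' to establish that makes the cases collapse; the correct final step is simply to verify, case by case, which of $(\tau,\tau^{-1})$, $(\tau,\tau^{-1}\sigma^{-1})$, $(\sigma\tau,\tau^{-1})$, $(\sigma\tau,\tau^{-1}\sigma^{-1})$ is the generator that is even in each coordinate.
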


We now use this result to calculate the multiplicity of each of the components of $\mathcal OGbc$ and show that there are never more than two isomorphism classes.

\begin{prop}\label{res}
 Let $G, G', G'', b, c, \nu, \mu$ be as above. If $\nu^w$ and $\mu^w$ are symmetrically placed blocks in an $i$-string, then as a $\mathcal O[G' \times G''^{op}]$ module, $\mathcal OGbc$ decomposes into $d$ isomorphism classes of modules, each occurring with multiplicity $2^\eta\beta$, where $d,\eta$ and $\beta$ are given as follows:
\begin{itemize}
\item Case 1: $w>0$.
\begin{enumerate}
 \item If $i \neq 0$, then $\beta= \alpha!$ and if $i = 0$, then $\beta = \frac{\alpha!}{2^\frac{\alpha-\epsilon(\alpha)}{2}}$.
 \item $d=1$ unless $\epsilon(G')=\epsilon(\nu)$ and $\epsilon(G'')=\epsilon(\mu)$, in which case $d=2$.
\item We have $\eta=\frac{\alpha -\epsilon(\alpha)}{2} +(1 - \epsilon(G'))$.
\end{enumerate}

 \item Case 2: $w=0$. The values of $\beta$ and $\eta$ are as in the previous case, but $d= 2^{2-|\epsilon(G')-\epsilon(\nu)|-|\epsilon(G'')-\epsilon(\mu)|}$
\end{itemize}
\end{prop}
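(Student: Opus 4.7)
The plan is to start from the preceding Kessar--Schaps Lemma, which reduces the vertex-$D$ part of the $\mathcal O[G'\times G''^{\mathrm{op}}]$-decomposition of $\mathcal OGbc$ to a decomposition of the semisimple bimodule $k\wt S_{|\nu|}\ov b\ov c$ under $k[\ov G'\times\ov G''^{\mathrm{op}}\times R_{G'G''}]$. Since the blocks $\ov b$ and $\ov c$ have weight zero, the module $k\wt S_{|\nu|}\ov b\ov c$ is very transparent: its constituents as a $\ov G'\times\ov G''^{\mathrm{op}}$-bimodule are indexed by pairs of simples, and by Clifford theory each such simple either remains irreducible or splits into two conjugates according to whether $\epsilon(\ov G')=\epsilon(\nu)$ (and similarly on the $G''$-side). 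Grafting back to the full $G'\times G''^{\mathrm{op}}$ then reduces to tracking how the generators of $R_{G'G''}$ permute these pieces.

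For Case 1 ($w>0$), the value of $d$ is read off by a four-way case analysis on $(\epsilon(G'),\epsilon(G''))$ in the Lemma. In each case the generator of $R_{G'G''}$ has the form $(x,y^{-1})$ with $x\in\{\tau,\sigma\tau\}$ and $y\in\{\tau,\tau\sigma\}$. The key observation is that $\sigma$ swaps the two Clifford-conjugate simples on the $\ov G'$-side exactly when the parity of $G'$ matches the splitting (i.e.\ $\epsilon(G')=1=\epsilon(\nu)$), and analogously on the $G''$-side, while $\tau$ acts trivially on central-character labels. This identifies conjugate pairs except precisely when $\epsilon(G')=\epsilon(\nu)$ and $\epsilon(G'')=\epsilon(\mu)$, giving $d=2$; otherwise $d=1$.

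To obtain the multiplicity $2^\eta\beta$, I would count the $D$-fixed bimodule dimensions directly. The base factor $\alpha!$ comes from the free $\wt S_\alpha$-action on the complement of the support of $\wt S_{pw}$, i.e.\ the usual permutation-module multiplicity. The modification for $i=0$ reflects the fact that the Scopes involution $K_0$ inserts or removes a part $1$, collapsing $\wt S_\alpha$-orbits in pairs corresponding to each even part, which accounts for the factor $2^{(\alpha-\epsilon(\alpha))/2}$ in the denominator. The exponent $\eta=\frac{\alpha-\epsilon(\alpha)}{2}+(1-\epsilon(G'))$ then collects (i) the doubling attached to each parity-flip step between spin blocks in the string, producing the first summand, and (ii) the extra factor of $[\wt S_n:\wt A_n]=2$ that appears when the source $G'$ is alternating, producing the $(1-\epsilon(G'))$ correction.

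Finally, Case 2 ($w=0$) is the trivial-defect version: $D=1$, so there is no vertex-$D$ restriction and no $R_{G'G''}$ identifications to perform. Hence each independent Clifford splitting on the two sides contributes an extra factor of $2$ to $d$, yielding $d=2^{2-|\epsilon(G')-\epsilon(\nu)|-|\epsilon(G'')-\epsilon(\mu)|}$, while $\beta$ and $\eta$ are unchanged because they depend only on the local core structure. The main obstacle will be the careful bookkeeping in the four-way case analysis for $d$ in Case 1 and the verification that the combined factor $2^\eta\beta$ really is independent of the parity combination of $(G',G'')$ --- the compensating doublings coming from the $R_{G'G''}$-identifications and from the index of the alternating subgroup must cancel exactly to produce the universal formula in the proposition.
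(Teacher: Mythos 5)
Your overall route is the one the paper takes: reduce via the preceding lemma to the semisimple weight-zero bimodule $k\wt S_{|\nu|}\bar b\bar c$, count its $1$, $2$, or $4$ isomorphism classes of constituents by the parity criteria $\epsilon(G')=\epsilon(\nu)$ and $\epsilon(G'')=\epsilon(\mu)$ (which gives Case 2 directly), pass to $R_{G'G''}$-orbits when $w>0$, take $\beta$ from Kessar's count of the ways to remove $\alpha$ $i$-nodes, and obtain $\eta$ from the spin branching rules together with a correction of $1$ when $G'$ is alternating. So the skeleton matches the paper's proof.

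However, the ``key observation'' you use to compute $d$ in Case 1 is incorrect, and in the decisive configuration it would produce the wrong answer. You assert that $\sigma$ performs the swap of the two conjugate simples precisely when $\epsilon(G')=1=\epsilon(\nu)$, while $\tau$ acts trivially on isomorphism classes. But when $\epsilon(G')=\epsilon(G'')=1$ the generator of $R_{G'G''}$ is $(\tau,\tau^{-1})$ and contains no $\sigma$ at all; under your stated rules nothing would be identified, and with $\epsilon(\nu)=\epsilon(\mu)=1$ you would conclude $d=4$ instead of the claimed $d=2$. The actual mechanism --- which the paper only asserts, in the form ``the action of $R_{G'G''}$ permutes each pair where there is a pair'' --- is that $\tau$ is an odd element of $\wt S_{pw}$, and in the double cover odd elements of disjoint support anticommute, so conjugation by $\tau$ twists a $\wt S_{|\nu|}$-simple by the sign automorphism and hence interchanges the two associate simples exactly in the case $\bar G'=\wt S_{|\nu|}$ with $\nu$ odd; the element $\sigma$ is what does the interchange only in the complementary case $\bar G'=\wt A_{|\nu|}$ with $\nu$ even, where it is an odd outer element permuting the two conjugate $\wt A_{|\nu|}$-simples. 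You need to redo the four-way check with this in hand; once a genuine swap is exhibited in every configuration in which a pair exists, the orbit count $4\mapsto 2$, $2\mapsto 1$, $1\mapsto 1$ yields the stated value of $d$. Your justifications of $\beta$ in the case $i=0$ and of the first summand of $\eta$ are also vaguer than a complete argument requires, but the paper itself disposes of these points by citation to Kessar and to Stembridge's branching rules, so they are not gaps relative to the paper's own standard.
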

 \begin{proof}
 \begin{enumerate}

 \item The values of $\beta$ are given in \cite{K}, and correspond to the number of different ways to get from $\nu$ to $\mu$ by removing $i$-nodes.  The formula printed in \cite{K} contains an error, but that has been corrected here.

     \item From the arguments in
 \cite{KS}, it is clear that $k \bar G \bar b \bar c$ decomposes as a $k[\bar G \times \bar G''^{op}]$-module into $1$, $2$, or $4$ different isomorphism classes of tensor products of a simple modules of $k \bar G' \bar b$ and the dual of a simple module of $k \bar G'' \bar c$, all with the same multiplicity $2^\eta$. The exact number $d$ is given by the formula in Case $2$, since a core produces two blocks precisely when the group and the core have the same parity.
 There are two possibilities for the first exactly when $\epsilon(G')=\epsilon(\nu)$ and there are two possibilities for the second exactly when $\epsilon(G'')=\epsilon(\mu)$.

   If $w=0$, then $d$ is the number of isomorphism classes in the bimodule.  If $w>0$, the action of $R_{G'G''}$ permutes each pair where there is a pair, so in the case where there are one or two isomorphism classes, there is only one in the set of fixed points.  When there are four isomorphism classes, they are permuted in pairs, leaving two different classes in the end.

     \item It remains to calculate $2^\eta.$  If $\epsilon(G')=1$, then this is the restriction bimodule, and by the branching rules given in \cite{St}, we get $\eta = \frac{\alpha-\epsilon(\alpha)}{2}$. When $\epsilon(G')=0$, so that we have an extra index of $2$, we must add $1$ to get $\eta = \frac{\alpha+\epsilon(\alpha)}{2}$. The formula in the lemma combines the two cases into one.
\end{enumerate}

\end{proof}

\begin{defn} We denote the representatives of the indecomposable bimodules from which $\mathcal OGbc$ is composed by $E^{(\alpha)}_1, \dots, E^{(\alpha)}_d$.
We then set $E^{(\alpha)}=\bigoplus_{j=1}^d E^{(\alpha)}_j$, and call it the \textit{restriction bimodule}. It depends on the parities of $G'$ and $G''$.
\end{defn}

So far we have defined the restriction bimodule only between equal weight blocks, symmetrically placed in a string.  If the blocks are at the ends of a string, then they will can be used to give Morita equivalences, as in \cite{LS}.  We want to represent them as divided powers of tensor products of bimodule which restrict one block at a time down the string.

\section{Short strings}

We now consider the consequences of this definition of divided powers in small cases. The length of a string is the number of edges.
By a \textit{short string} we mean an i-string of length less than or equal to three.
These are the strings for which we will try to understand the divided powers between blocks which are not symmetrically placed on their string.

 All short strings are translations of short strings with weight $0$ at each end, since any short string can be translated up until it reaches this situation.  We will call these strings \textit{fundamental}, and the fundamental short strings of types
 $$(0),$$ $$(0, 0)$$ will be called \textit{trivial} because they have no internal blocks.  We now
 give a complete description of the non-trivial fundamental short strings.  Within the proof we will also describe the string of lowest rank satisfying the conditions, and these strings can all be found in Figure 1 by substituting $p=5$ in the general formula.

We use a condensed notation $(n_0,\dots,n_t)$ for the cores, where $|n_i|$ is the number of parts congruent to $i$ or $p-i$, and it is positive when the parts are congruent to $i$, negative when they are congruent to $p-i$.  The source of this notation is in the abelian normal subgroup of the Weyl group of the relevant affine Lie algebra.

\begin{lem}\label{short} The structure of the non-trivial fundamental short $i$-strings is independent of $p$,  and depends on $i$ as follows:
\begin{enumerate}
\item If $i = 0$, then the only non-trivial fundamental short string has weight type $$(0, 1, 1, 0),$$ and the cores of the middle terms of the short string satisfy $n_1=0$ or $n_1=1$.
\item If $i=t$, then the non-trivial fundamental short strings string have weight type $$(0,2,0),$$ $$(0,4,4,0),$$ and the cores of the middle terms satisfy $|n_t|= 0$ or  $|n_t|=1$ respectively.
\item If $0 < i < t$, then the non-trivial fundamental short strings have weight type $$(0,1,0),$$ $$(0, 2, 2, 0),$$ and the cores of the middle terms satisfy $|n_i-n_{i+1}|=0$ or $|n_i-n_{i+1}|=1$, respectively.
\end{enumerate}
\end{lem}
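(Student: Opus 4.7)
The plan is to classify the non-trivial fundamental short $i$-strings by direct combinatorial analysis in the condensed notation $(n_0, \ldots, n_t)$, together with the explicit form of the Scopes involutions $K_i$. Both endpoints of a fundamental short string have weight $0$ and are therefore $p$-bar cores $\nu$ and $\mu$; since the string is traversed by successively adding $i$-nodes, we have $\mu = K_i(\nu)$ at the block level, and the length of the string equals the shift $\gamma_i(\mu) - \gamma_i(\nu)$ in the $i$-component of the content. A non-trivial short string has this shift equal to $2$ or $3$, so the task reduces to enumerating Scopes partners $\nu \neq \mu$ with small shift and reading off the interior weights and the condensed data of the middle cores.

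First, for the generic case $0 < i < t$, the involution $K_i$ swaps the counts of parts of residues $i \leftrightarrow i+1$ and $p-i \leftrightarrow p-i-1$, so in condensed notation the $i$-shift is controlled by $|n_i(\nu) - n_{i+1}(\nu)|$ (with signed contributions from the negative-residue side). A shift of $2$ is achieved precisely when the middle core satisfies $|n_i - n_{i+1}| = 0$, giving weight type $(0,1,0)$; a shift of $3$ requires $|n_i - n_{i+1}| = 1$ in the middle cores, giving $(0,2,2,0)$. For $i = t$, the content pattern reflects around the residue $t$, so each Scopes step doubles its contribution to the weight increment, yielding $(0,2,0)$ and $(0,4,4,0)$ in correspondence with $|n_t| = 0$ and $|n_t| = 1$ respectively. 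For $i = 0$, the involution $K_0$ toggles the part $1$ and swaps $ap + 1 \leftrightarrow ap - 1$; a short non-trivial string must combine exactly one toggle with one swap, forcing length $3$ and weight type $(0,1,1,0)$, with the two middle cores distinguished by whether the toggle step is applied first (giving $n_1 = 1$) or last (giving $n_1 = 0$).

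The main obstacle will be handling the boundary cases $i = 0$ and $i = t$, where the residue pattern is not symmetric in the same way as in the generic case. For $i = 0$, I must rule out length-$2$ strings by showing that neither a toggle nor a swap alone shifts $\gamma_0$ by $2$, and that a double toggle or double swap is incompatible with the core conditions. For $i = t$, I must justify the doubling of weights, which comes from the coincidence of the conjugate residues $t$ and $p - t = t+1$ at the turnaround of the content pattern. Once these two points are established, the claim that no further non-trivial fundamental short strings exist reduces to a direct check against the enumerated cases, and the $p$-independence of the classification follows from the fact that the condensed notation records only residues, with the lowest-rank examples in each case serving as explicit witnesses (visible in Figure 1 for $p = 5$).
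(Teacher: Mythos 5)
Your plan shares the paper's skeleton: both reduce the classification to the combinatorics of the Scopes involution $K_i$ on the condensed tuple $(n_0,\dots,n_t)$, with the same three-way case split on $i$. But the paper parametrizes by the \emph{middle} terms: it observes that the middle blocks, translated down to weight zero, form a trivial fundamental string of type $(0)$ (a core fixed by $K_i$) or $(0,0)$ (two cores swapped by $K_i$ and adjacent in rank), which directly yields the conditions $n_1\in\{0,1\}$, $|n_t|\in\{0,1\}$, $|n_i-n_{i+1}|\in\{0,1\}$ appearing in the statement. You parametrize by the weight-zero endpoints instead, and this leaves you owing a step the paper does not: recovering the condensed data of the middle cores from the endpoint cores. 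The interior cores of an $i$-string are not obtained from the endpoints by ``partial'' Scopes moves --- for $p=5$, $i=1$ the string of type $(0,1,0)$ through $\emptyset^1$ has endpoints $(3,1)^0$ and $(4,2)^0$, and $\emptyset$ is not an intermediate stage between these two partitions --- so ``reading off the condensed data of the middle cores'' is not automatic from your setup.

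The more serious gap is the determination of the interior weights. The content shift between the endpoints gives you the length of the string, but not the weights of the internal blocks; for that one needs to know at which weight $w$ a block $\rho^w$ first becomes internal to its $i$-string, and this is precisely what the paper imports from \cite{AS} (the formulas $w=|n_i-n_{i+1}|+1$ for $0<i<t$, $w=2|n_t|+2$ for $i=t$, and $w=1$ in the $i=0$ case). Your heuristic that ``each Scopes step doubles its contribution to the weight increment'' for $i=t$ captures the right phenomenon (the bar-residues $t$ and $p-t=t+1$ meet at the turnaround of the content pattern), but as stated it does not prove that the length-three $t$-string has interior weight $4$ rather than, say, $2$ or $3$. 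Likewise, for $i=0$ your ``exactly one toggle plus one swap'' description treats the single involution $K_0$ as a two-step process; the paper instead computes the rank difference $d=|2n_1-1|$ of the two middle cores, notes it is always odd, and concludes $n_1\in\{0,1\}$ is forced. To close the argument you must either prove these length-and-weight formulas from the crystal-graph combinatorics or cite them as the paper does; without one of the two, the weight types in the statement are asserted rather than established.
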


\begin{proof}
The middle terms of the short string are translations of trivial fundamental short strings.   A short $i$-string of type $(0)$ must arise from a core which is mapped to itself under $K_i$, and this cannot occur for $i=0$, since $n_1$ is an integer and is mapped to $1-n_1$ under the Scopes involution $K_0$. For $0<i<t$, this happens when $|n_i-n_{i+1}|=0$, and for $i=t$ for $n_t=0$.

 A short $i$-string of type $(0,0)$ must arise from two cores which are mapped into each other by $K_i$, since they are extremal in their $i$-string, and which differ in rank by one, since they are adjacent.

   Thus in each case we first show that the parts affected by $K_i$ in the trivial fundamental short strings are as described in the lemma, and then deduce the weight from  \cite{AS}.

\begin{enumerate}
\item If $i = 0$, the difference in ranks in a trivial string of type $(0,0)$ depends only on $n_1$ and is equal to the length of the string, $d=|n_1-(1-n_1)|=|2n_1-1|$.  For every value of $n_1$ except $0,1$, we have $d\geq 3$. By \cite{AS}, the weight $w$ at which these terms first become internal is $1$.

\item If $i=t$, then a non-trivial fundamental string of length $d$ satisfies $d=|n_t|+2$ and the weight of the internal block next to the ends is $w=2|n_t|+2$.  We get $d=2$ exactly when $|n_t|=0$, and then $w=2$, giving weight types $(0,2,0)$.  We get $d=3$ when $n_t=1$, and then $w=2+2=4$, giving weight type $(0,4,4,0)$.  

\item If $0 < i < t$, then $d=|n_i-n_{i+1}|+2$ and $w=|n_i-n_{i+1}|+1$.  The only possible values of $d$ giving a non-trivial fundamental $i$-string are $d= 2,3$, in which cases $w = 1,2$ and $|n_i-n_{i+1}|=0,1$, which gives the weight types
    $$(0,1,0),$$ $$(0, 2, 2, 0),$$ as required. Any core with $n_i=n_{i+1}$ produces a block of weight $1$ in a string of type $(0,1,0)$. Any core with $n_i= n_{i+1}\pm 1$ produces a block of weight $2$ in a string of type $(0,2,2,0)$.
\end{enumerate}

\end{proof}

Our aim is this section is to analyze the restriction maps between modules which occur in short strings of blocks of defect $1$ and $2$.  We first categorize all strings in which blocks of weight $1$ or $2$ can be internal, since external blocks are already taken care of in \cite{LS}.

\begin{lem}\label{max}  Spin blocks of weight $w$ can occur as internal blocks only in strings of length less than or equal to $2w+1$.
\end{lem}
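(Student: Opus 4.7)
My plan is to reduce the statement to the case of fundamental $i$-strings (those with weight $0$ at both endpoints), then bound the minimum internal weight from below by an explicit combinatorial analysis.

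\textbf{Step 1 (reduction to fundamental strings).} The two endpoints of any $i$-string are Scopes--Morita equivalent, and in particular share a common weight $v \geq 0$. I would observe that translating the entire string down by $v$ (removing $v$ $p$-bar hooks uniformly from every block in the string) produces a fundamental $i$-string of the same length $d$, in which the internal block in question has weight $w - v \geq 0$. Since the desired bound $d \leq 2w + 1$ is weaker than $d \leq 2(w - v) + 1$, it suffices to establish the inequality for fundamental strings.

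\textbf{Step 2 (fundamental strings).} For fundamental $i$-strings, I would extend the three-case analysis of \lemref{short} from $d \leq 3$ to arbitrary length $d$ by tracking how the Scopes involution $K_i$ acts on the core at each intermediate step. The expected closed-form expressions for the weight $w_k$ of the block at position $k$ ($1 \leq k \leq d-1$), interpolating the data already supplied by \lemref{short} for the cases $d = 2,3$, are
\[
w_k = k(d-k) \text{ for } 0 < i < t, \quad w_k = 2k(d-k) \text{ for } i = t, \quad w_k = k(d-k)/2 \text{ for } i = 0,
\]
where in the last case $d$ is always odd (so $k(d-k)$ is automatically even). In each case the minimum over $1 \leq k \leq d - 1$ is attained at the corner positions $k \in \{1, d-1\}$ and is at least $(d-1)/2$.

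\textbf{Step 3 (conclusion).} From $w \geq (d-1)/2$ we obtain $d \leq 2w + 1$, which is the asserted inequality.

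\textbf{Main obstacle.} The principal difficulty lies in Step 2: verifying the internal weight formulas rigorously for fundamental $i$-strings of arbitrary length. The formulas are directly checkable for $d \leq 3$ via \lemref{short}, but extending them to larger $d$ requires additional combinatorics, most delicately for $i = 0$, where the final inequality is tight and hence any miscount of the weight would invalidate the argument. I would tackle this step via the abacus representation of $p$-bar cores, in which each step along the $i$-string corresponds to a specific bead motion and the weight of each block is computed by a readily verifiable statistic (the number of fillable slots) on the abacus.
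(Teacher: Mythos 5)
Your overall strategy diverges from the paper's. The paper's proof is essentially a one-line citation of Corollary 5.2.1 of \cite{AS}: the weight sequence along a maximal string is symmetric, increases toward the center, and has decreasing successive differences, so the longest possible string containing an internal block of weight $w$ has difference sequence $w,w-1,\dots,1,1,\dots,w-1,w$ (even length) or $w,w-1,\dots,1,0,1,\dots,w-1,w$ (odd length), giving $d\le 2w$ or $d\le 2w+1$ at once, with no reduction to fundamental strings and no case analysis on $i$. You instead try to re-derive, from scratch, exact closed-form weight formulas for fundamental strings of arbitrary length and then take a minimum. That is a strictly harder task than the lemma requires: all you need is a lower bound on the weight of the block adjacent to an end of the string, which is exactly what the cited corollary supplies.

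The genuine gap is your Step 2, which you yourself flag as the ``main obstacle'' and do not prove; since Steps 1 and 3 are routine, Step 2 \emph{is} the proof, and as stated it appears to be false. Your formula $w_k=k(d-k)$ for $0<i<t$ has successive differences $d-2k+1$, i.e.\ decreasing by $2$ at each step; it matches the $d\le 3$ data of \lemref{short} (e.g.\ $(0,2,2,0)$), but for $d=4$ it predicts the weight sequence $(0,3,4,3,0)$, whereas \lemref{1 and 2} exhibits $(0,2,3,2,0)$ as the length-four fundamental string for $0<i<t$ (difference sequence $2,1,1,2$, decreasing by $1$ near the center). So the quadratic interpolation from the short-string data is not legitimate, and any argument resting on these exact formulas collapses. (The weaker inequality $w_1\ge (d-1)/2$ that you actually need may still be true, but that is precisely the content of Corollary 5.2.1 of \cite{AS}, not something your computation establishes.) Step 1 also needs more care than you give it: the paper asserts only that \emph{short} strings are translations of fundamental ones, and ``translating down'' an arbitrary maximal string by the common endpoint weight $v$ while preserving its length is an additional claim requiring justification.
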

\begin{proof}
 This is an immediate result of Corollary 5.2.1 from \cite{AS}:
The maximal strings in the block-reduced crystal graph are symmetrical, with the weights increasing toward the center and the successive differences decreasing.

If the length of the string is even, then the maximal possible sequence of weight differences would be $$w,w-1,\dots,1,1,\dots,w-1,w,$$ and if it is odd, then the maximal possible sequence would be $$w,w-1,\dots,1,0,1,\dots,w-1,w.$$  In the first case the maximal possible length is $2w$, and in the second, it is $2w+1$.

\end{proof}

\begin{lem}\label{1 and 2}
For $w = 1$,  the only possible weights sequences in strings having $1$ as an internal weight are
\begin{itemize}
\item $(0,1,1,0)$ for $i=0$, and
\item $(0,1,0)$ for $0<i<t$.

\end{itemize}

For $w=2$, the only possible weight sequences in which a block of weight 2 occurs as an internal blocks are
\begin{itemize}
\item $(0,2,3,3,2,0)$ or $(1,2,2,1)$ for $i=0$,
\item $(0,2,0)$  for $i=t$, and
\item $(1,2,1)$,$(0,2,2,0)$ or $(0,2,3,2,0)$ for $0<i<t$,

\end{itemize}
\end{lem}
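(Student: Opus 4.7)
The plan is to combine the length bound of Lemma \ref{max} with the shape constraint of Cor.~5.2.1 of \cite{AS} (reproduced in the proof of Lemma \ref{max}), and then match each surviving profile to the allowed residues $i$ via Lemma \ref{short}.

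For $w=1$, Lemma \ref{max} restricts attention to strings of length at most $3$. Symmetry and monotonicity toward the centre force any profile carrying a $1$ at a non-endpoint position to be either $(0,1,0)$ or $(0,1,1,0)$, since any genuine translation would push the internal weight above $1$. Reading off Lemma \ref{short} then gives $(0,1,0)$ for $0<i<t$ and $(0,1,1,0)$ for $i=0$, and no such profile exists for $i=t$, whose fundamental short strings already begin at internal weight $2$.

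For $w=2$, Lemma \ref{max} allows length at most $5$, and I would enumerate symmetric monotone profiles of each length $d\in\{2,3,4,5\}$ containing a $2$ at a non-endpoint and then distribute them by $i$. For $d\leq 3$ every candidate is either a fundamental short string of Lemma \ref{short} or a translation by $1$ of a $w=1$ fundamental; this produces $(0,2,0)$ for $i=t$, $(0,2,2,0)$ and $(1,2,1)$ for $0<i<t$, and $(1,2,2,1)$ for $i=0$. For $d=4,5$, I would apply the sharper form of Cor.~5.2.1 of \cite{AS} --- that the successive increments on the way up are dominated by $w_{\max},w_{\max}-1,\dots,1$ --- together with the $i$-dependent first-increment bound extracted from Lemma \ref{short}: at most $1$ for $i=0$, at most $2$ for $0<i<t$, and $2$ or $4$ for $i=t$. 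This leaves only $(0,2,3,2,0)$ for $0<i<t$ and $(0,2,3,3,2,0)$ for $i=0$, and no $d=4,5$ profile for $i=t$.

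The main obstacle is systematically excluding the many competing candidates at $d=4,5$ such as $(0,1,2,1,0)$, $(1,2,3,2,1)$, $(0,2,4,2,0)$ or $(0,2,4,4,2,0)$. For each I would trace back through the $K_i$-action on the core (using the $|n_i-n_{i+1}|$, $n_t$ or $n_1$ data appearing in Lemma \ref{short}) to show that no core produces the required increment sequence while keeping the internal weight exactly $2$; the $i=0$ analysis is the most delicate, since $K_0$ mixes a change in the presence of the part $1$ into the increments. Once these exclusions are carried out, the listed profiles are exactly the answer.
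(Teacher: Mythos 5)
Your overall strategy matches the paper's: bound the length by Lemma \ref{max}, dispose of $d\leq 3$ via Lemma \ref{short} and its translates, and treat $d=4,5$ separately via Corollary 5.2.1 of \cite{AS}. The $w=1$ part and the $d\leq 3$ part of the $w=2$ case are fine and agree with the paper. The problem is the step you yourself flag as ``the main obstacle'': the exclusion of the competing length $4$ and $5$ profiles is never actually carried out, only planned. Worse, the two tools you propose for it do not do the job. The ``dominated by $w_{\max},w_{\max}-1,\dots,1$'' form of Corollary 5.2.1 fails to exclude $(0,1,2,1,0)$, since its increments $1,1$ satisfy $1\leq 2$ and $1\leq 1$. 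And the ``$i$-dependent first-increment bound extracted from Lemma \ref{short}'' is not legitimate here: Lemma \ref{short} only describes \emph{short} strings (length at most $3$), and its first-increment data does not transfer to strings of length $4$ or $5$ (for instance, for $i=t$ with $|n_t|=2$ the first increment is $2|n_t|+2=6$, outside your claimed ``$2$ or $4$''). So as written the $d=4,5$ case is a genuine gap, and the fallback you sketch --- a core-by-core analysis through the $K_i$-action --- is a correct but much heavier route than necessary, and is also left unexecuted.

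The paper closes this gap with the full strength of Corollary 5.2.1, the same statement already used to prove Lemma \ref{max}: along each half of a maximal string the positive increments are \emph{strictly} decreasing, and the string is symmetric (with at most one zero increment at the exact middle when $d$ is odd). For $d\geq 4$ this forces $\delta_1>\delta_2\geq 1$, so $\delta_1\geq 2$ and $\delta_1+\delta_2\geq 3$; hence an internal block of weight $2$ can only sit at distance $1$ from an end of weight $0$ with $\delta_1=2$, $\delta_2=1$ (and $\delta_3=0$ when $d=5$). This yields exactly $(0,2,3,2,0)$ and $(0,2,3,3,2,0)$ and simultaneously eliminates every one of your candidates --- $(0,1,2,1,0)$, $(1,2,3,2,1)$, $(0,2,4,2,0)$, $(0,2,4,4,2,0)$ --- each of which has two consecutive equal positive increments on the same half. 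If you replace your deferred core analysis with this one observation, your argument becomes complete and coincides with the paper's.
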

\begin{proof}  For $w=1$, we have $d \leq 3$ by Lemma \ref{max} and for fundamental short strings these cases have already been handled in Lemma \ref{short}. Any non fundamental short strings would have the block of weight $1$ already external. For $w=2$, the fundamental short strings were calculated in Lemma \ref{short}, and the only possible non-fundamental string would have to have an internal vertex of weight $2$ and and extremal vertex of weight $1$, which can occur only in the translated weight types $$(1,2,1),$$ $$(1,2,2,1),$$ each with its proper $i$. It remains to consider $d=4,5$ by Lemma \ref{max}, and we also know, by Corollary 5.2.1 in \cite{AS}, that the possible sequences of weight differences in the two cases are $2,1,1,2$ and $2,1,0,1,2$.  These give the desired weight types, which are $(0,2,3,2,0)$ and $(0,2,3,3,2,0)$, respectively. It is possible to find examples of all the string types mentioned in the lemma in the block-reduced crystal graph in Figure 1.
\end{proof}

We now consider the restriction functors which will be needed for the categorification.
These are supposed to be liftings of restriction operators on the Grothendieck groups of the different blocks in the string.

We want them to be exact functors, defined by tensor products with bimodules over $\mathcal O$, and we want the composition of all the mappings from one end of the string to the other to give us a predictable number of copies of the module we defined above as $E^{(\alpha)}$. In so far as we succeed in defining the restriction bimodules, they  will be labeled as $E_{i,n}$, where $i$ is the type of string and $n$ is the rank of the group $G'=\widetilde S_n$ or $G'=\widetilde A_n$ on which the operator is acting.
 Each is supposed to go from one block to the next block in its $i$-string, a block of a group $G''$ of rank $n-1$ and not to contain multiple copies of bimodules from the same isomorphism class. Finally, we want the $E_{i,n}$ to preserve the type of the block.  It appears that in almost all cases the ordinary restriction bimodule combined with cutting to the correct block will give the desired map.

Every block of defect $1$ has either $t$ or $2t$ simple modules, depending on parity.  If the parity is odd, then the block of $\widetilde A_n$ will have $t$ simples occurring, a block of type $M$ and the corresponding block of $\widetilde S_n$ will have $2t$ simples, a block of type $Q$\cite{BK1}.  If the parity of the simples is even the two groups will be reversed.  We will  begin with the blocks of type $M$, which are simpler.

We propose the following definition, which we will then check for various short strings and various blocks of defect $1$ and $2$:

\begin{defn}  The bimodule $E_{(i,n)}$ is given up to isomorphism by taking the direct sum of one indecomposable bimodule from each isomorphism class of $\mathcal O \wt S_nb_nc_{n-1}$ as an $\mathcal O [G' \times G''^{op}]$ bimodule, where the parities of $G'$ and $G''$ are so chosen that each block in the $i$-string is carried to a block of the same type. Here $b_n$ and $c_{n-1}$ are the block idempotents of two adjacent blocks of the same type on the string.  There are actually two different bimodules, $E_{(i,n)}^M$ and $E_{(i,n)}^Q$, one for the block of type $M$ and one for the block of type $Q$, but we will not distinguish in the notation.
\end{defn}

By results of Juergen Mueller \cite{M}, the Brauer trees of blocks of weight $1$ and type $M$ are all linear. The exceptional divisor has multiplicity $2$, and its position on the tree depends on the number $s$ of $n_i$ in the root $t$-tuple which are different from zero.
Furthermore, the exceptional vertex corresponds to the unique irreducible in which the prime $p$ occurs as a part.

\begin{examp} We let $p=5$, and take the smallest block with defect $1$, that of the group $A_5$, which happens to be of type M.  It lies in a string in the block-reduced crystal graph of length $3$ for $i=0$, containing the blocks $(6,1)^0$,$(1)^1$, $\emptyset^1$,  and $(4)^0$. We now look at the string in the crystal graph, i.e., a string of simple modules, which we represent by their labeling partitions and separate by arrows.  Since were are trying to lift restriction, we write the string of blocks going from blocks of highest rank groups to lower rank groups. The corresponding string of simple modules in the crystal graph is:
$$ (6,1) \rightarrow (5,1) \rightarrow (4,1) \rightarrow (4).$$
Other strings of simple modules of length 3 are
$$ (6,2,1) \rightarrow (5,2,1) \rightarrow (4,2,1) \rightarrow (4,2)).$$
$$ (6,3,1) \rightarrow (5,3,1) \rightarrow (4,3,1) \rightarrow (4,3)).$$
$$ (8,6,3,1) \rightarrow (8,5,3,1) \rightarrow (8,4,3,1) \rightarrow (8,4,3)).$$
\end{examp}

For blocks of defect $1$, a block with the same top and socle must be either a projective module or a uniserial submodule, and the two cases can be distinguished by the dimensions.  As a check of our calculations, we know from Lemma \ref{res} that the total restriction from one end of the string to the other should be $3!$ times the lowest rank simple module, since here $\beta = \frac{3!}{2^\frac{3-1}{2}}=3$, $d=1$ and $\eta = 2^{\frac{3-1}{2}}$.  We illustrate this is the first of the four cases above:

\begin{examp}  Again assume $p=5$.  We apply the restriction functor $E_{0,7}$ to the defect $0$ module $L_{(6,1)}$ in $\widetilde A_7$ of dimension $20$, labeled by the strict partition $(6,1)$.  The resulting submodule of $\widetilde A_6$ will be indecomposable.  The Brauer graph of the principal module of $A_6$ is a star with exceptional vertex corresponding to the irreducible labeled by $(5,1)$.  We expect $E_{0,7}$ of $L_{(6,1)}$ to be an indecomposable projective module beginning and ending with $L_{(5,1)}$.  Since both the irreducible modules $L_{(5,1)}$ and $L_{(3,2,1)}$ have dimension $4$, this must be the projective module $P$ with $L_{(5,1)}$ as its head.

We now act on $P$ by the next restriction in the string, $E_{0,6}$. The restriction of the four dimensional simple module labeled by $(5,1)$ is a module whose head and socle are the two dimensional module labeled by $(4,1)$, so that in fact it must be unique indecomposable submodule of the projective module $P'$ of dimension 10 with top and socle given by this simple.  Thus we have
$$E^2(S) = (P')^2.$$
We thus can define the divided power $$E^{(2)} = P'.$$  Finally, we restrict to $A_4$ by $E_{0,5}$.  Each of the three copies of the simple labeled by $(4,1)$ restricts to the simple of the same dimension labeled by $(4)$, while the simple labeled by $(3,2)$ falls into another block, not in the same string.  Thus altogether
$$E^3(L_{(6,1)}) = L_{(4)}^6;E^{(3)}(L_{(6,1)}) = L_{(4)}.$$ The divided powers of the functor $F$ are precisely in the opposite direction, i.e., $F(L_{(4)}) = P'$, $F^{(2)} = P$, and $F^{(3)}= L_{(6,1)}$.

Now let us turn to the blocks of type $Q$.  There are two modules in $\widetilde S_7$ with the label $(6,1)$, which we will denote by $L_{(6,1)}^+$ and $L_{(6,1)}^-$.  Each has degree $20$.  The result of restricting one of them to $\widetilde S_6$ is the irreducible projective of dimension $20$ whose top and socle determine one of the two modules with label $(5,1)$, and the two can be distinguished by passing to the quotient field $K$ of $R$.  However, from that point on, the restrictions can no longer be distinguished by passing to $K$, because the restriction of each of the two associate simples has the same composition factors over $K$, these being one copy of each of the two associate simples with label $(4,1)$.  Over $R$ the top and socle are interchanged when one passes from one associate to the other, but over $K$ this difference is lost.

Brundan and Kleshchev \cite{BK1} have proven that when one looks at the modules involved as supermodules, the restriction of a simple module must be an indecomposable module with the same irreducible top and socle.  The irreducible top is, in fact, the next module in the string in the crystal graph.  If one looks at ordinary modules, this identification of top and socle ceases to be true. In the example with which we are dealing, the restriction of either of the modules with label $(5,1)$ is a module with
top equal to one of the two associates labeled by $(4,1)$, and socle labeled by the other.  In the restriction of the other, this will be reversed. One can decide, in trying to label modules by $+$ of $-$, that one will always label as $+$ the one at the top of the restriction of the $+$, but given the duality between top and socle, this choice is arbitrary.

Continuing to $\widetilde S_4$, some factors drop out because they are correspond to a block other than the two associate blocks of with label $(4)$, and we are left with $3!$ copies of each of two non-isomorphic bimodules of dimension twenty, each being the tensor product of the original left $k\widetilde S_7$-module and one of the two associate one-dimensional right $\widetilde S_4$-modules.

\end{examp}

We now construct the one-sided tilting complex, using the methods of \cite{RS}, \cite{SZ}.  By \cite{R1},\cite{RS}  there is a one-sided tilting complex consisting of linear complexes of projectives outward from the exceptional vertex. Letting each complex be labeled by its highest part, we would get:
\begin{alignat*}{8}
&  && && &&P_p\\
&  && &&P_{p+1} \rightarrow &&P_p\\
&  &&P_{p+2} \rightarrow &&P_{p+1} \rightarrow &&P_p\\
&  && && &&\dots\\
&  P_{p+s} \rightarrow &&\dots\rightarrow &&P_{p+1} \rightarrow &&P_p\\
&  && && &&P_{p-s-1}\rightarrow P_{p-s-2} \rightarrow &&\dots \rightarrow &&P_{p-t}\\
&  && && &&\dots\\
& && && &&P_{p-s-1}\rightarrow P_{p-s-2}\\
& && && &&P_{p-s-1}
\end{alignat*}
Letting each of these complexes be represented by $Q_r$, where $r$ is the highest index in the first set of complexes, and r is the lowest index in the second set of complexes, we find that one possible tilting complex which will lead to the complex obtained from the
representative for $s$ by removing the part $1$ is given by
\begin{alignat*}{9}
& && && && &&Q_{p}\\
& && && && &&Q_{p+1}\\
& && && && &&Q_{p+1}\rightarrow&&Q_{p+2}\\
& && && && && &&\dots\\
& && && && && &&Q_{p+s-3}\rightarrow&&Q_{p+s-2}\\
& && && && && && &&Q_{p+s-2}\rightarrow&&Q_{p+s-1}\\
& && && && && && && &&Q_{p+s-1}\rightarrow&&Q_{p+s}\\
&Q_{p-t}\rightarrow&&Q_{p-t+1}\\
& &&Q_{p-t+1}\rightarrow&&Q_{p-t+2}\\
& && && \dots\\
& && && &&Q_{p+s-1}\rightarrow&&Q_p\\
\end{alignat*}

To compose the two equivalences, we take mapping cones of the various combined complexes, leading to a new one-sided complex, with almost all terms in degree zero, except the term containing the part $p$. 
If we follow this procedure for each of the maps in the second tilting complex, taking care of the degrees of each complex, we get

\begin{alignat*}{8}
&  && &&P_{t+1}\\
&  && &&\dots\\
&  && &&P_{p-s-2}\\
& && &&P_{p-s-1}\rightarrow &&P_p\\
&  && && &&P_p\\
&  && &&P_{p+1} \rightarrow &&P_p\\
&  && &&P_{p+2}\\
&  && &&\dots\\
&  && &&P_{p+s}
\end{alignat*}

This complex is recognizable as an elementary one-sided tilting complex of the type  introduced by Okuyama \cite{O}.
 We have thus prepared to prove the following:
\begin{lem}  In a short string of weight type $(0,1,1,0)$, the derived equivalence between the two terms of weight $1$ is given by an elementary one-sided tilting complex.
\end{lem}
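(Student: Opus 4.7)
The plan is to realize the derived equivalence between the two internal weight-$1$ blocks as the composition of two derived equivalences through the Morita-equivalent weight-$0$ endpoints of the string, and then to simplify the resulting composite to an elementary Okuyama form. The setup is provided by \lemref{1 and 2}: a string of type $(0,1,1,0)$ has $i=0$, two external weight-$0$ blocks, and two internal weight-$1$ blocks; the two external ends are already Morita equivalent by \cite{LS}. Each internal block has cyclic defect group of order $p$, and by Mueller~\cite{M} its Brauer tree is linear with an exceptional vertex of multiplicity two whose position is determined by the number $s$ of nonzero entries in the core $t$-tuple.

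First, for each of the two internal weight-$1$ blocks I would write down the one-sided Rickard--Rouquier tilting complex that realizes its derived equivalence with the adjacent external weight-$0$ block, using the linear-complex description of \cite{R1},\cite{RS} for Brauer tree algebras. These are the two columns of complexes $Q_r$ displayed in the excerpt, indexed respectively by their highest and lowest parts. Composing these two tilting complexes with the Morita equivalence of \cite{LS} identifying the external ends then yields a derived equivalence between the two internal blocks, expressed as a double complex whose entries are the $Q_r$.

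Next, I would compute this composition by forming the mapping cones of the assembled double complex, keeping track of degrees and shifts as in the second tabular display in the excerpt. The key computational point is that, after cancellation, all but one of the combined $Q_r \to Q_{r+1}$ contributions collapse to stalk complexes concentrated in degree zero, leaving only a single surviving differential of the form $P_{p-s-1}\to P_p$ alongside the remaining projectives $P_{t+1},\dots,P_{p-s-2},P_p,P_{p+1},\dots,P_{p+s}$ in degree zero. Recognising the result as the elementary tilting complex of Okuyama~\cite{O} then gives the lemma.

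The main obstacle will be the bookkeeping in the mapping-cone step: one must verify that the degree shifts and sign conventions coming from the two Rickard--Rouquier complexes line up so that the composite really does collapse to a one-sided complex with a single nontrivial arrow, and that no unwanted summands survive outside degree zero. Once this collapse is established, identification with Okuyama's elementary form and the derived-equivalence conclusion follow at once from the explicit shape of the surviving complex.
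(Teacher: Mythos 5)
Your overall skeleton---compose two one-sided tilting complexes of linear $Q_r$-complexes via mapping cones and recognise the collapsed result as an Okuyama elementary complex---does match the construction the paper carries out immediately before the lemma. But there is a genuine error in how you set up the composition. You propose to route the derived equivalence ``through the Morita-equivalent weight-$0$ endpoints,'' writing down for each internal weight-$1$ block a tilting complex realizing ``its derived equivalence with the adjacent external weight-$0$ block.'' No such derived equivalence exists: a weight-$0$ block has defect zero and a single simple module, while each internal weight-$1$ block is a Brauer tree algebra with $t$ (or $2t$) simples, and the number of simples is a derived invariant. The weight-$0$ endpoints play only a combinatorial and Morita-theoretic role (bounding the string; the source-algebra equivalences of \cite{LS} handle the external blocks). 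The actual intermediate object in the paper's construction is the Brauer star: the first displayed family of linear complexes $P_{p+s}\rightarrow\cdots\rightarrow P_p$ and $P_{p-s-1}\rightarrow\cdots\rightarrow P_{p-t}$ is Rickard's folding of the linear Brauer tree to the star, the second (in the $Q_r$) passes from there to the tree of the other internal block, and the mapping-cone computation composes these two. Your plan as stated would have you composing equivalences that cannot exist.

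Separately, you stop at ``recognising the result as the elementary tilting complex of Okuyama,'' but that recognition is precisely the content of the paper's proof and is not automatic from the shape of the complex. One must exhibit the subset $I_0$ of indices whose projectives are shifted, check that the only projectives $P_j$ having composition factors $L_i$ with $i\in I_0$ are the two neighbours of the shifted vertices in the Brauer tree (here $P_{p+1}$ and $P_{p-s+1}$), and verify that the differentials $P_j\rightarrow\bigoplus_{i\in I_0}P_i$ have kernels equal to the maximal submodules with no composition factors indexed by $I_0$; the paper gets this last point from the maps in the folded tilting complex being maximal maps. Without this verification you have a one-sided tilting complex but not yet the assertion that it is \emph{elementary}, which is what the lemma claims.
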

\begin{proof}
If $\rho^1$ and $\sigma^1$ are the middle terms, then they are both allowed equivalent to the middle terms of a $0$-string of the normal form given in the lemma, and thus there must be a tilting complex of the given form.

In order for the tilting complex to be elementary (in this case, to the right), we must have a subset $I_0$ of the set of indices $I$, such that the projective modules $P_i$ with indices in $I_0$ are all shifted one degree to the right, and every other projective module $P_j$ is replaced by a complex $P_j \rightarrow \bigoplus_{i \in I_0} P_i$, where the map $h$ is such that the kernel is the maximal submodule of $P_j$ which contains no composition factors $L_i$ with $i$ in $I_0$.  (The definition of elementary on the left is dual.) The only $P_j$ with composition factors labeled by indices in $I_0$ are the two adjacent projectives in the Brauer tree, which in this case are $P_{p+1}$ and $P_{p-s+1}$.  Since the maps in the tilting complex are maximal maps, the kernels have the desired maximality condition.

\end{proof}

Now that we have the divided powers, we can construct a candidate for the bimodule $\Theta$ which is to give the derived equivalence between the block of $A_5$ and the block of $A_6$.  Here we are in a case parallel to the case in which Rickard first proposed the complex of bimodules $\Theta$, in which the block on which we want $\Theta$ to act is at depth one in the string.  The complex $\theta$ is actually the direct sum of four complexes:
$$ \Theta = \Theta(3) \bigoplus \Theta(1) \bigoplus \Theta(-1) \bigoplus \Theta(-3).$$
 The numbers are the weights of the action of $h$ from the $sl_2$-action. The complexes at the extremal points are the divided powers corresponding to the source algebra equivalences derived in \cite{LS}.  The complexes in the middle are the tilting complex and its dual giving the derived equivalence.  Our candidate complex is
$$ E^{(1)} \rightarrow E^{(2)}\circ F^{(1)}.$$  This is actually dual to the complexes used in \cite{CR}, but this could have been fixed by reversing the numbering of the complexes that we used.

\section{The case of weight $2$}

The short strings of weight two are of two types, those with weight series (1,2,2,1), which are simply translations of strings with weight series (0,1,1,0), and those of weight series (0,2,2,0).  This difference is reflected clearly in the tilting complexes in the examples for which we have decomposition matrices, in that the examples of the types (1,2,2,1) which we calculated have tilting complexes which are derived directly from the tilting complex of the (0,1,1,0) example, whereas the tilting complex for the (0,2,2,0) example is an elementary complex with only one "active" module, that coming from the defect zero block.

      We checked all the examples for $p=5$ which fall into the range for which the decomposition matrices have been calculated, and checked that the transformations of the decomposition matrices were compatible with what we would expect from categorification, i.e., were indeed elementary.

      \begin{examp} The lowest rank  example of a string of type $(1,2,2,1)$ is from block $B = (\emptyset)^2$ of $\widetilde A_{10}$ to block $C = (1)^2$ of $\widetilde A_{11}$.  The block $B$  has five simple modules.   Of these, $L_{(5,4,1)}$ is connected in the crystal graph to  the simple module $L_{(5,4)}$ of $\widetilde A_9$, and similarly $L_{(4,3,2,1)}$ is connected in the crystal graph to  the simple module $L_{(4,3,2)}$.  After we have rearranged the rows to correspond by $K_0$  and the columns to correspond by the the crystal graph, we get that the decomposition matrix $D_C$ is obtained from the decomposition matrix $D_B$ of $B$ by

$$ D_B = \begin{matrix}
541&64&4321&73&532\\
[ 2& 1& 2& 0& 1 ]&~~~~(64)\\
[ 0& 1& 0& 1& 0 ]&~~~~(82)\\
[ 0& 1& 0& 1& 1 ]&~~~~(73)\\
[ 1& 1& 0& 0& 0 ]&~~~~(91)\\
[ 0& 0& 1& 0& 0 ]&~~(4321)\\
[ 1& 0& 0& 0& 0 ]&~~~~(10)\\
[ 1& 0& 0& 0& 0 ]&~~~~(10)\\
[ 0& 0& 1& 0& 1 ]&~~~(532)\\
[ 0& 0& 1& 0& 1 ]&~~~(532)\\
[ 1& 0& 1& 0& 1 ]&~~~(541)\\
[ 1& 0& 1& 0& 1 ]&~~~(541)
\end{matrix};$$

$$D_C = \begin{matrix}
551&641&5321&731&632\\
[ 2& 1& 2& 0& 1 ]&~~~~(641)\\
[ 0& 1& 0& 1& 0 ]&~~~~(821)\\
[ 0& 1& 0& 1& 1 ]&~~~~(731)\\
[ 1& 0& 0& 0& 0 ]&~~~~~(11)\\
[ 0& 0& 1& 0& 1 ]&~~~~(632)\\
[ 1& 1& 0& 0& 0 ]&~~~(10~1)\\
[ 1& 1& 0& 0& 0 ]&~~~(10~1)\\
[ 0& 0& 1& 0& 0 ]&~~~(5321)\\
[ 0& 0& 1& 0& 0 ]&~~~(5321)\\
[ 1& 1& 1& 0& 0 ]&~~~~~(65)\\
[ 1& 1& 1& 0& 0 ]&~~~~~(65)
\end{matrix};$$

By the method described in \cite{Sch},\cite{Al}, we calculated matrices $S$ and $M$ such that

$$S * D_C * M = D_G.$$

where  $S = diag[-1,1,1,-1,-1,-1,-1,-1,-1,-1,-1]$

$$
M = \left[\begin{array}{*{20}c}
   -1 & -1 & 0 & 0 & 0  \\
   0 & {  1} & 0 & 0 & 0  \\
   0 & 0 & -1 & 0 & -1  \\
   0 & 0 & 0 & {  1} & 0  \\
   0 & 0 & 0 & 0 & {  1}
 \end{array}\right]
$$

      The matrix $S$ records the parity in the shift of the irreducible modules over a field of characteristic $0$, while the matrix $M$ gives a "virtual" tilting complex. We number the projectives by the column in which they appear. In this case the matrix $M$ is compatible with a tilting complex:
      \begin{alignat*}{9}
      & && && && P_1\\
      & && && P_2 \rightarrow && P_1\\
      & && && && P_3\\
      & && && P_4\\
      & && && P_5 \rightarrow && P_3\\
    \end{alignat*}
    This is, as expected, a translation of the tilting complex for the string of type $(0,1,1,0)$.  Note that the column $P_4$, which is not derived from translation, is left unchanged.
    \end{examp}
    \begin{examp} The block of lowest rank for $p=5$ with weight type $(0,2,2,0)$ occurs in a $1$-string.  There is no such string for $p=3$ because there $t=1$, so there are no intermediate Scopes involutions, so this is indeed the example of lowest rank.

    In the $0$-strings of the types $(0,1,1,0)$ and $(1,2,2,1)$ there were no crossovers, because the parity of the simple modules remained the same for the entire string.  In $i$-strings for $i>0$ we have the parity switching back and forth.  Thus the two blocks of weight $2$ are the block $(1)^2$ in $\widetilde A_{1l}$, which we will denote as before by $C$, and the block $(2)^2$ in $\widetilde S_{12}$, which we will denote by $C'$.  There are both blocks of type $M$, and the decomposition matrices are as follows:

By the method described in \cite{Al}, we calculated matrices $S$ and $M$ such that

$$S * D_C * M = D_G.$$

where  $S = diag[-1,1,-1,1,-1,1,1,1,1,1,1]$

and

$$
M = \left[\begin{array}{*{20}c}
   1 & 0 & 0 & 0 & 0  \\
   0 & 1 & 0 & 0 & 0  \\
   0 & 0 & 1 & 0 & 0  \\
   0 & 0 & 0 & 1 & 0  \\
   -2 & -1 & -1 & -2 & {-1}
 \end{array}\right]
$$

      We are interested in the functor $E$ in the rank decreasing direction.

      $$\Theta(-1) =  E^{(2)}\circ F^{(1)} \rightarrow E^{(1)}.$$

      The elementary one-sided tilting complex that we deduce from the decomposition matrices has the form

          \begin{alignat*}{9}
      & && && P_5^2 \rightarrow &&P_1\\
      & && && P_5 \rightarrow &&P_2\\
      & && && P_5^2 \rightarrow &&P_3\\
      & && && P_5 \rightarrow &&P_4\\
      & && && P_5\\
    \end{alignat*}

In order to fill this out to a complex of bimodules of the form $\Theta(-1)$, we must take as many copies of each irreducible complex as the degree of the corresponding simple module in $B$.  In addition, since only copies of $P_5$ are expected in the term on the left, we presume that the terms which have been canceled in passing from the two-sided to the one-sided tilting complex are of the form
$$P_5 \rightarrow P_5,$$
with the identity map between the projectives.

\end{examp}

In summary, for blocks of type $M$, the permutation module method from \cite{KS} should allow the definition of divided powers.  For blocks of type $Q$, it appears that the permutation modules can produce divided powers, but the bimodules will usually not be indecomposable.

\end{document}